\newtheorem{theorem}{Theorem}
\newtheorem{lemma}[theorem]{Lemma}
\newtheorem{prop}{Proposition}
\newcommand\QED{\ifhmode\allowbreak\else\nobreak\fi\quad\nobreak$\Box$\medbreak}
\newenvironment{proof}{\par\noindent{\bf Proof:}}{\rm\enspace{\QED\par}}
\begin{document}
\title{Period Lengths for Iterated  Functions.\\
(Preliminary Version)}
\author{Eric Schmutz\\
{
 Eric.Jonathan.Schmutz@drexel.edu}\\
{ Drexel Math Department and University of}\\ { Delaware
Mathematical Sciences Department.}\\
 }
 \maketitle
 \abstract{ Let $\Omega_{n}$ be the $n^{n}$-element set
consisting of  functions that have $[n]$
 as both domain and codomain.
 Since  $\Omega_{n}$ is finite,  it
 is clear by the pigeonhole principle that, for any $f\in\Omega_{n}$,
 the sequence of compositional iterates
 \[ f,f^{(2)},f^{(3)},f^{(4)}\dots \]
 must eventually repeat. Let ${\bf T}(f)$ be the period of this
 eventually periodic sequence of functions,
  i.e. the least positive integer $T$ such that, for all $m\geq n$,
 \[ f^{(m+T)}= f^{(m)}.\]
 \par
 A closely related number ${\bf B}(f)=$ the product of the
lengths of the cycles of $f$, has previously been used as an
approximation for ${\bf T}.$  This paper proves that the average
values of these two quantities are quite different. The expected
value of ${\bf T}$ is
 \[
 \frac{1}{n^{n}}\sum\limits_{f\in \Omega_{n}}{\bf T}(f)=
\exp\left( k_{0}\sqrt[3]{ \frac{n}{ \log^{2}
n}}\bigl(1+o(1)\bigr)\right),
\] where $k_{0}$ is a complicated but explicitly defined
constant that is approximately 3.36.  The expected value of ${\bf
B}$ is much larger:
\[ \frac{1}{n^{n}}\sum\limits_{f\in \Omega_{n}}{\bf B}(f)=\exp\left(
\frac{3}{2}\sqrt[3]{n}(1+o(1))\right).
\]
 }
 \section{Introduction}
 Let $\Omega_{n}$ be the $n^{n}$-element set consisting
of  functions that have $[n]$
 as both domain and codomain, and let $f^{(t)}$ denote $f$
 composed with itself $t$ times.
 Since  $\Omega_{n}$ is finite,  it
 is clear (by the pigeonhole principle) that, for any $f\in\Omega_{n}$,
 the sequence of compositional iterates
 \[ f,f^{(2)},f^{(3)},f^{(4)}\dots \]
 must eventually repeat. Define  ${\bf T}(f)$ to be the period of
 this sequence, i.e. the least $T$ such that, for all $m\geq n$,
 \[ f^{(m+T)}= f^{(m)}.\]
\par We say $v\in [n]$ is a {\sl cyclic vertex} if there is a $t$ such that
$f^{(t)}(v)=v.$  The  restriction of $f$ to cyclic vertices is a
permutation of the cyclic vertices,  and the period ${\bf T}$ is
just the order of this permutation, i.e. the least common multiple
of the cycle lengths.\par
 \par Harris showed that ${\bf T}(f)=
e^{\frac{1}{8}\log^{2}n(1+o(1))}$ for most functions $f$.  To make
this precise,  let $P_{n}$ denote the uniform distribution on
$\Omega_{n}$; $P_{n}(\lbrace f\rbrace)=n^{-n}$ for all $f$. Define
$a_{n}=\frac{1}{8}\log^{2}n,$let $b_{n}=\frac{1}{\sqrt{24}}
\log^{3/2}n,$  and let $\phi(x)=
\frac{1}{\sqrt{2\pi}}\int\limits_{-\infty}^{x}e^{-t^{2}/2}dt.$
Using Erd\H os and Tur\' an's seminal results\cite{ET3}, Harris
proved
\begin{theorem}(Harris\cite{harris2})
\label{harris}For any fixed $x$, \[\lim\limits_{n\rightarrow
\infty} P_{n}\left( \frac{\log {\bf T}-a_{n}}{b_{n} } \leq
x\right)=\phi(x).
 \]
\end{theorem}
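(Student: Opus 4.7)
The plan is to reduce $\log \mathbf{T}(f)$ to the log-order of a uniform random permutation on a random set of cyclic vertices, and then to combine the Erd\H os--Tur\'an CLT of \cite{ET3} with the classical concentration of the number of cyclic vertices of a random mapping.

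\textbf{Step 1 (conditioning).} Let $C = C(f)$ denote the cyclic set and $M = |C|$. As the excerpt records, $\mathbf{T}(f)$ is the order of the permutation $\pi = f|_C$ on $C$. The standard decomposition of a functional digraph --- choose $C$, choose a permutation $\pi$ of $C$, then build a rooted forest on $[n]\setminus C$ whose roots are sent by $f$ into $C$ --- shows that the number of $f$'s realizing a prescribed $(C,\pi)$ depends only on $|C|$. Therefore, conditional on $C$, the permutation $\pi$ is uniform on $S_C$, and $\mathbf{T}(f)$, given $M=m$, is distributed as the order of a uniformly random permutation of $m$ letters.

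\textbf{Step 2 (Erd\H os--Tur\'an).} For $\sigma$ uniform on $S_m$ with $m \to \infty$,
\[
\frac{\log\mathrm{ord}(\sigma) - \tfrac12\log^2 m}{\tfrac{1}{\sqrt{3}}\log^{3/2} m} \; \longrightarrow \; N(0,1)
\]
in distribution. Combined with Step 1, this yields a CLT for $\log\mathbf{T}$ with random centering $\tfrac12\log^2 M$ and random scaling $\tfrac{1}{\sqrt3}\log^{3/2} M$, provided $M\to\infty$ in probability.

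\textbf{Step 3 (concentration of $M$ and Slutsky).} The classical asymptotic analysis of random mappings gives $M = \sqrt{\pi n/2}\,(1 + o_P(1))$, hence $\log M = \tfrac12\log n + O_P(1)$. Substituting yields $\tfrac12\log^2 M = a_n + O_P(\log n)$ and $\tfrac{1}{\sqrt3}\log^{3/2} M = b_n\bigl(1+o_P(1)\bigr)$. Since $\log n / b_n = O(\log^{-1/2} n) = o(1)$, Slutsky's theorem replaces the random centering and scaling by $a_n$ and $b_n$ without affecting the limit, and taking expectation over $M$ delivers the stated Gaussian limit for $(\log\mathbf{T} - a_n)/b_n$. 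A sanity check on the constants: substituting $\log m \approx \tfrac12\log n$ into $\tfrac12\log^2 m$ and $\tfrac{1}{\sqrt3}\log^{3/2} m$ produces exactly $\tfrac18\log^2 n$ and $\tfrac{1}{\sqrt{24}}\log^{3/2} n$.

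The main obstacle is Step 3. The gap between the centering scale $\log^2 n$ and the fluctuation scale $\log^{3/2} n$ is only a factor of $\sqrt{\log n}$, so even a unit fluctuation in $\log M$ shifts the centering by $\Theta(\log n) = b_n/\sqrt{\log n}$. One must therefore control $\log M$ to within $O_P(1)$ \emph{multiplicatively} --- not merely $o_P(\log n)$ --- and also verify that the Erd\H os--Tur\'an convergence is sufficiently uniform over the likely range of $M$, so that integrating out $M$ does not smear the limit. Once these quantitative forms are in hand, everything else is routine.
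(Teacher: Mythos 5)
The paper does not prove this statement at all: it is quoted from Harris \cite{harris2}, and the only thing the author adds is the remark that Harris's theorem is stated for $\mathbf{O}(f)$ rather than $\mathbf{T}(f)$, with the transfer justified by $|\mathbf{O}(f)-\mathbf{T}(f)|<n$. Your outline is essentially a reconstruction of Harris's own route (condition on the cyclic set, apply Erd\H os--Tur\'an, integrate out the number of cyclic vertices), which is exactly what the paper's attribution says Harris did; it even has the advantage of working with $\mathbf{T}$ directly, so the $\mathbf{O}$ versus $\mathbf{T}$ comparison is not needed. Steps 1 and 2 are correct and are also used later in the paper (the uniformity of $f|_{\mathcal Z}$ given $\mathcal Z$ appears just before equation (\ref{summm}), and the Erd\H os--Tur\'an constants check out: substituting $\log m\approx\tfrac12\log n$ into $\tfrac12\log^2 m$ and $\tfrac1{\sqrt3}\log^{3/2}m$ gives $a_n$ and $b_n$).

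Two points in Step 3 need repair, though neither is fatal. First, your statement $M=\sqrt{\pi n/2}\,(1+o_P(1))$ is false: the number of cyclic vertices does \emph{not} concentrate multiplicatively. By the Rubin--Sitgreaves formula (Theorem \ref{zpmf}), $P_n(Z=m)=\frac{n!\,m}{(n-m)!\,n^{m+1}}$, and $M/\sqrt{n}$ converges in distribution to the Rayleigh law with density $xe^{-x^2/2}$; only the mean is $\sim\sqrt{\pi n/2}$. What is true, and is all you actually use, is tightness of $M/\sqrt n$ away from $0$ and $\infty$, i.e.\ $\log M=\tfrac12\log n+O_P(1)$; with that, your computation $\tfrac12\log^2M=a_n+O_P(\log n)$ and $O_P(\log n)/b_n=O_P(\log^{-1/2}n)=o_P(1)$ goes through and the Slutsky step is sound. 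Second, the uniformity issue you flag at the end is real but standard: since $\phi$ is continuous, the Erd\H os--Tur\'an convergence of distribution functions is uniform in $x$ (P\'olya), so after restricting to the event $M/\sqrt n\in[1/K,K]$ (probability $\geq 1-\epsilon$ for large $K$) the adjusted thresholds converge to $x$ uniformly over that range of $m$ and dominated convergence finishes the argument. With those two repairs your sketch is a complete proof.
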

 {\bf Comment:} Harris actually stated his theorem for a closely
related random variable ${\bf O}(f)=$ the number of distinct
functions in the sequence $f,f^{(2)},f^{(3)},\dots $. However it
is clear from his proof that Theorem \ref{harris} holds too. In
fact, it is straightforward to verify that, for all $f\in
\Omega_{n}$, \ $|{\bf O}(f)-{\bf T}(f)|<n.$ Such inequalities have
been proved by D\' enes \cite{Denes}.
\par
 Let  ${\bf B}(f)$ be the product, with multiplicities, of the
lengths of the cycles of $f$. Obviously ${\bf T}(f)\leq {\bf
B}(f)$ for all $f$,  and for some
 exceptional functions  ${\bf B}(f)$
is {\sl much} larger than ${\bf T}(f)$. For example, if $f$ is a
permutation with $n/2$ cycles of length $2$, then ${\bf
B}(f)=2^{n/2}$, but ${\bf T}(f)=2.$ In fact, the maximum value
${\bf T}$ can have is $e^{\sqrt{n\log n}(1+o(1))}$ \cite{Nicolas}.
However,  for most random mappings $f\in \Omega_{n}$,  $ {\bf
B}(f)$ is a reasonably good approximation for $ {\bf T}(f).$  For
example, the following proposition follows easily from results of
Arratia and Tavare \cite{AT}.
\begin{prop}
\label{typical}
 Let  $\ell_{n},n=1,2,\dots $ be any sequence of positive numbers.
There is a constant $c>0$ such that, for all sufficiently large
$n$,
\[ P_{n}(\log {\bf B}-\log {\bf T}\geq \ell_{n} )\leq \frac{c\log n (\log\log
n)^{2}}{\ell_{n}}.\]
\end{prop}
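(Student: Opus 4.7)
The plan is to prove the stronger expectation bound $E_n[\log {\bf B} - \log {\bf T}] = O(\log n (\log\log n)^2)$ and then apply Markov's inequality; since $\log {\bf B} - \log {\bf T} \geq 0$ always, this converts directly into the claimed tail estimate with $c$ equal to the implied constant.

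To compute the expectation, I would exploit the fact that both ${\bf B}(f)$ and ${\bf T}(f)$ depend on $f$ only through the multiset of cycle lengths of the restriction of $f$ to its cyclic vertices. Under $P_n$, conditional on the set $S$ of cyclic vertices, this restriction is a uniformly random permutation of $S$. Hence, letting $\pi_m$ denote a uniformly random permutation of $[m]$,
\[
E_n[\log {\bf B} - \log {\bf T}] = \sum_{m=1}^n P_n(|S|=m) \cdot E[\log B(\pi_m) - \log T(\pi_m)],
\]
and since $m \leq n$, it suffices to prove $E[\log B(\pi_m) - \log T(\pi_m)] = O(\log m (\log\log m)^2)$ uniformly in $m$.

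For the permutation estimate I would decompose
\[
\log B(\pi) - \log T(\pi) = \sum_{p \text{ prime}} \log p \cdot \sum_{j \geq 1} \bigl(N_{p,j} - [N_{p,j} \geq 1]\bigr),
\]
where $N_{p,j}$ is the number of cycles whose length is divisible by $p^j$; summing over $j$ recovers the total $p$-adic valuation of the cycle lengths, while the sum of indicators recovers the maximum $p$-adic valuation. The Arratia--Tavare theory shows that the joint law of $(N_{p,j})$ is close (in total variation, restricted to low-dimensional marginals) to that of independent Poissons with means $\lambda_{p,j} = \sum_{\ell \leq m,\ p^j \mid \ell} 1/\ell$. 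For $X \sim \mathrm{Poisson}(\lambda)$, $E[X - [X \geq 1]] = \lambda - 1 + e^{-\lambda} \leq \min(\lambda,\, \lambda^2/2)$. Splitting the pairs $(p,j)$ into the ``dense'' regime $\lambda_{p,j} \geq 1$ (bounded by $\lambda_{p,j}$) and the ``sparse'' regime (bounded by $\lambda_{p,j}^2/2$), weighting by $\log p$ and summing over prime powers $\leq m$, produces the target bound: one factor of $\log\log m$ arises from the Mertens-type sum over small primes in the dense regime and the other from summation over depths $j$.

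The main obstacle is controlling the Poisson approximation error uniformly enough in $(p,j)$ so that, after weighting by $\log p$ and summing over all prime powers $\leq m$, the aggregate error stays below $\log m (\log\log m)^2$ and does not swamp the main term. This is exactly what Arratia--Tavare's sharp total-variation bounds provide; once they are invoked, the remaining estimates reduce to standard prime-counting asymptotics.
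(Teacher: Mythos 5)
Your argument is essentially the paper's: both condition on the set of cyclic vertices, reduce to the estimate $E_{n}(\log {\bf B}-\log {\bf T}\mid Z=m)=O(\log m(\log\log m)^{2})$ for a uniform random permutation of the $m$ cyclic vertices, and then apply Markov's inequality. The only difference is that the paper simply cites Theorem 8 of Arratia and Tavar\'e for that permutation estimate, whereas you sketch its proof via the prime-power decomposition and Poisson approximation --- which is in substance the argument of the cited reference, so this is the same route with the black box opened.
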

Although $\log {\bf B}(f)$ and  $\log {\bf T}(f)$ are
approximately equal  for most functions
   $f$,the
set of exceptional functions is nevertheless large enough so that
the expected values of the two random variables ${\bf B}$ and
${\bf T}$ are quite different. The following theorem will be
proved.
\begin{theorem}
\label{muB} $  E_{n}({\bf
B})=\exp\left(\frac{3}{2}\sqrt[3]{n}+O(n^{1/6})\right).$
\end{theorem}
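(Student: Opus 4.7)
The plan is to decompose $\sum_{f\in\Omega_n}\mathbf{B}(f)$ according to the set $C$ of cyclic vertices of $f$, reducing the problem to a one-dimensional sum that I would attack by Laplace's method.

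Each $f\in\Omega_n$ determines its cyclic set $C$, its cyclic permutation $\pi=f|_C\in S_C$, and a forest on $[n]\setminus C$ whose trees are rooted at vertices of $C$. Since $\mathbf{B}(f)=\mathbf{B}(\pi)$ depends only on $\pi$, and the number of rooted forests on $[n]$ with root set exactly $C$ (with $|C|=k$) is the classical $k\,n^{n-k-1}$, I would obtain
\[
\sum_{f\in\Omega_n}\mathbf{B}(f)\;=\;\sum_{k=1}^{n}\binom{n}{k}\,k\,n^{n-k-1}M_k,\qquad M_k:=\sum_{\pi\in S_k}\mathbf{B}(\pi).
\]
By the exponential formula with cycles weighted by their lengths, $\sum_{k\ge0}M_kx^k/k!=\exp\bigl(\sum_{\ell\ge1}x^\ell\bigr)=\exp(x/(1-x))$, so a standard saddle-point analysis (saddle at $x\approx 1-1/\sqrt k$) yields $M_k/k!=(2\sqrt\pi)^{-1}k^{-3/4}e^{2\sqrt k-1/2}(1+o(1))$.

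Substituting, and writing $n!/\bigl((n-k)!\,n^k\bigr)=\prod_{i=0}^{k-1}(1-i/n)=\exp(-k^2/(2n)+O(k^3/n^2))$ for $k=o(n)$, the summand becomes $\exp(g(k))$ with $g(k)=2\sqrt k-k^2/(2n)+O(1)$. Solving $g'(k)=k^{-1/2}-k/n=0$ places the unique interior saddle at $k^\star\sim n^{2/3}$, at which $g(k^\star)=2n^{1/3}-\tfrac12 n^{1/3}=\tfrac32 n^{1/3}$ and $g''(k^\star)=-3/(2n)$. A Gaussian approximation around $k^\star$ (effective width $\Theta(\sqrt n)$) then yields the main contribution $\exp(\tfrac32 n^{1/3}+O(\log n))$.

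The main obstacle is tail control, since the expansion $\log\prod(1-i/n)=-k^2/(2n)+\cdots$ breaks down once $k$ approaches $n$. For $|k-k^\star|>n^{1/2+\epsilon}$, Taylor expansion with $g''(k^\star)=-3/(2n)$ gives $g(k)\le g(k^\star)-\Omega(n^{2\epsilon})$, making those $k$ contribute negligibly; for $k=\Omega(n)$ where the quadratic approximation fails, the crude bound $M_k\le k!\,e^{2\sqrt k}\,k^{O(1)}$ (from the EGF) together with the super-polynomial smallness of $\prod(1-i/n)$ suffices. Combining the central Laplace estimate with these tail bounds absorbs all polynomial and constant corrections into the stated $O(n^{1/6})$ error, giving $\log E_n(\mathbf{B})=\tfrac32 n^{1/3}+O(n^{1/6})$.
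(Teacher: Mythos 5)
Your argument is correct, but it takes a genuinely different route from the paper's proof of this theorem. The paper proves Theorem \ref{muB} by decomposing $\mathbf{B}$ over the weak \emph{components} of $D_f$: it invokes Hansen's deconditioning construction (Theorem \ref{jhthm}) to make the component counts $\alpha_d$ independent Poissons under $P_z$, uses R\'enyi's estimate $\kappa_d\sim\sqrt{2d/\pi}$ for the mean cycle length of a connected map to get $E_z(\mathbf{B})=\exp\bigl(\sum_d c_dz^d\bigr)$ with $c_d=\tfrac{1}{\sqrt{2\pi d}}+O(1/d)$, and then extracts coefficients via Rankin's bound and Odlyzko's Tauberian theorem. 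You instead condition on the set of \emph{cyclic vertices}: your identity $\tfrac{1}{n^n}\binom{n}{k}kn^{n-k-1}M_k=P_n(Z=k)\,E(\mathbf{B}\mid Z=k)$ is precisely the Rubin--Sitgreaves decomposition (Theorem \ref{zpmf} and equation (\ref{summm})) that the paper reserves for the harder Theorem \ref{muT}, here combined with the pleasant fact that cycles weighted by their lengths have EGF $x/(1-x)$, so $\sum_kM_kx^k/k!=e^{x/(1-x)}$ with its classical coefficient asymptotics $e^{2\sqrt{k}}$ up to polynomial factors. Your saddle $k^\star=n^{2/3}$, value $2n^{1/3}-\tfrac12n^{1/3}=\tfrac32n^{1/3}$, curvature $-3/(2n)$, and tail control via concavity of $2\sqrt{k}-k^2/(2n)$ and the uniform bound $\prod_{i<k}(1-i/n)\le e^{-k(k-1)/(2n)}$ are all sound. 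What the paper's machinery buys is generality (the deconditioning applies to any component-determined statistic, and Odlyzko's theorem packages the lower bound); what your route buys is self-containedness --- it needs neither Hansen's theorem nor a Tauberian theorem, only the explicit EGF and Laplace's method --- and, carried out carefully, it actually yields the sharper error $O(\log n)$ in the exponent rather than $O(n^{1/6})$.
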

To state a corresponding theorem for ${\bf T}$, we need to define
a constant $k_{0}$.
First define
$I=\int\limits_{0}^{\infty}\log\log(\frac{e}{1-e^{-t}})dt,$ and
define $\beta_{0}=\sqrt{8I}$. This constant first appears first in
\cite{gs} where it is proved that the expected order of a random
permutation is $\exp\left(\beta_{0}\sqrt{ n/\log
n}(1+o(1))\right).$ Also define
 \begin{equation}
 \label{kdef}
 k_{0}=\frac{3}{2}(3I)^{2/3}.
 \end{equation}
We prove
\begin{theorem}
\label{muT} \[ E_{n}
 ({\bf T})=
\exp\left( k_{0}\sqrt[3]{ \frac{n}{ \log^{2}
n}}\bigl(1+o(1)\bigr)\right).\]
\end{theorem}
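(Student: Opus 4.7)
The plan is to reduce the question to the expected order of a uniformly random permutation and then finish by Laplace's method.

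First, I condition on the number $C$ of cyclic vertices of $f$. A classical fact about random mappings states that, given $\{C=c\}$, the restriction of $f$ to its cyclic vertices is a uniformly random permutation of those $c$ points. Since $\mathbf{T}(f)$ is exactly the order of this permutation,
\[
  E_n(\mathbf{T}) \;=\; \sum_{c=1}^{n} P_n(C=c)\,\mu_c,
  \qquad \mu_c := E[\mathrm{ord}(\pi) : \pi\in S_c\ \text{uniform}].
\]

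Second, I insert the two known asymptotics. The distribution of $C$ is asymptotically Rayleigh, $P_n(C=c)= \frac{c}{n}\exp\!\bigl(-\tfrac{c^{2}}{2n}(1+o(1))\bigr)$, with suitable tail bounds. By Goh and Schmutz~\cite{gs}, $\mu_c=\exp\!\bigl(\beta_{0}\sqrt{c/\log c}(1+o(1))\bigr)$ with $\beta_{0}=\sqrt{8I}$. Parametrising $c=\gamma\,n^{2/3}(\log n)^{-1/3}$ and using $\log c\sim \tfrac{2}{3}\log n$, the logarithm of the summand becomes
\[
  \Bigl(\beta_{0}\sqrt{\tfrac{3\gamma}{2}}-\tfrac{\gamma^{2}}{2}\Bigr)\bigl(\tfrac{n}{\log^{2}n}\bigr)^{1/3}(1+o(1))
  \;=:\; g(\gamma)\bigl(\tfrac{n}{\log^{2}n}\bigr)^{1/3}(1+o(1)).
\]

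Third, I extract the maximum of $g$. Solving $g'(\gamma)=0$ gives $\gamma^{*}=(3I)^{1/3}$, and a short computation yields $g(\gamma^{*}) = 2(3I)^{2/3}-\tfrac{1}{2}(3I)^{2/3}=\tfrac{3}{2}(3I)^{2/3}=k_{0}$. Hence Laplace's method pins the dominant contribution at $c\sim \gamma^{*}n^{2/3}(\log n)^{-1/3}$ and produces $\exp\!\bigl(k_{0}(n/\log^{2}n)^{1/3}(1+o(1))\bigr)$, as claimed.

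The main obstacle is Step~2, specifically the upper bound: the Goh--Schmutz estimate is a pointwise asymptotic, whereas I need $\log\mu_c\le\beta_{0}\sqrt{c/\log c}(1+o(1))$ \emph{uniformly} in $c$ throughout the range where $P_n(C=c)$ is not already negligible (roughly $c\le n^{2/3+\varepsilon}$). A crude bound like Landau's $\log\mu_c\le\sqrt{c\log c}(1+o(1))$ exceeds the target by a factor of order $\log n$ once $c\gg n^{2/3}(\log n)^{-1/3}$, so one must extract an effective version of~\cite{gs}---most likely by rerunning its saddle-point analysis of the generating function $\sum_c\mu_c\,x^c/c!$ with explicit error terms valid on the relevant contour. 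The matching lower bound is routine: restrict the sum to a small window about $\gamma^{*}n^{2/3}(\log n)^{-1/3}$ and invoke the two pointwise asymptotics inside it.
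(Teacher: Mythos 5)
Your proposal is correct and follows essentially the same route as the paper: condition on the number $Z$ of cyclic vertices (the paper uses the exact Rubin--Sitgreaves formula $P_{n}(Z=m)=\frac{n!\,m}{(n-m)!\,n^{m+1}}$ rather than its Rayleigh approximation), insert the asymptotic for the expected order of a random permutation, and optimize over $m\asymp n^{2/3}(\log n)^{-1/3}$, arriving at the same $\gamma^{*}=(3I)^{1/3}$ and $k_{0}=\frac{3}{2}(3I)^{2/3}$. The uniformity obstacle you flag for the upper bound is exactly what the paper resolves by citing Stong's effective refinement \cite{Stong} of \cite{gs}, namely $\log M_{m}=\beta_{0}\sqrt{m/\log m}+O\bigl(\sqrt{m}\log\log m/\log m\bigr)$ with an explicit error term valid for all $m$, which (together with the concavity of the logarithm of the summand, so that the maximum over $m$ is attained at a unique critical point) gives the uniform bound without rerunning any saddle-point analysis.
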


\par The paper is organized as follows.
Section 2 reviews some relevant graph theory. In particular, we
need  Renyi's  asymptotic approximations for the expected length
of the cycle in a random $f$ that is chosen uniformly from those
$f\in \Omega_{n}$ that have one cycle.  In section 3, we apply
Hansen's deconditioning argument so that estimation of $E_{n}({\bf
B})$ can be reduced to a simpler problem of estimating the
coefficients of a particular generating function $E_{z}({\bf B}).$
In section 4, we use a Tauberian theorem of Odlyzko to estimate
the coefficients of $E_{z}({\bf B})$ and complete the proof of
Theorem \ref{muB}. In section 5, estimates for $E_{n}({\bf T})$
are deduced from earlier work on the expected order of a random
permutation.

\section{ The cycle length for a connected map.}
 If $f\in\Omega_{n}$, let $D_{f}$ be the directed graph with
vertex set $[n]$ and a directed edge from $v$ to $w$ if and only
if $f(v)=w$.
 Provided
we count loops and cycles of length 2, the  weak components of
$D_{f}$ each have exactly one directed  cycle.  Let $U_{n}\subset
\Omega_{n}$ be the set of functions for which $D_{f}$ has exactly
one weak component.  For $f\in U_{n}$, let $L_{n}(f)$ be the
length of the unique cycle $f$ has.  Let $\kappa_{n}$ be the
expected value of $L_{n}$ for a uniform random $f\in U_{n}.$  We
will need asymptotic estimates for $\kappa_{n}$. Apparently Renyi
did this calculation in \cite{renyi}; see page 366
 of Bollob\' as \cite{Bol}.
\begin{lemma} (Renyi)
\label{kappan}
  \[ |U_{n}|=\sum\limits_{k}\binom{n}{k}(k-1)! kn^{n-1-k}=
n^{n}\sqrt{\frac{\pi}{2n}}(1+O(\frac{1}{\sqrt{n}})),\] and
%
  \[ \kappa_{n}=\frac{1}{|U_{n}|}\sum\limits_{k}k\binom{n}{k}(k-1)! kn^{n-1-k}
=\sqrt{\frac{2n}{\pi}}(1+O(\frac{1}{\sqrt{n}})).\]
\end{lemma}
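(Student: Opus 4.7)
The plan is to combine a standard decomposition of connected functional graphs with a pair of purely algebraic identities. Every $f\in U_{n}$ is determined by its unique directed cycle together with a forest of rooted trees hanging off the cycle vertices. Choosing the set of $k$ cyclic vertices ($\binom{n}{k}$ ways), arranging them in a directed cycle ($(k-1)!$ ways), and attaching a rooted forest on $[n]$ whose root set is those $k$ vertices ($k n^{n-1-k}$ ways, by the classical rooted-forest formula of Cayley--Borchardt) yields the stated sum for $|U_{n}|$. I would then rewrite it as $|U_{n}| = \sum_{k=1}^{n}\frac{n!}{(n-k)!}\,n^{n-1-k}$ and substitute $j=n-k$ to obtain the compact form $|U_{n}| = (n-1)!\sum_{j=0}^{n-1} n^{j}/j!$.

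The first asymptotic then follows from Stirling's formula for $(n-1)!$ together with the classical estimate $\sum_{j=0}^{n-1} n^{j}/j! = (e^{n}/2)\bigl(1 + O(1/\sqrt{n})\bigr)$. Equivalently, $\Pr(X\le n-1) = 1/2 + O(1/\sqrt{n})$ for $X\sim\mathrm{Poisson}(n)$, which follows from a local central limit estimate together with the fact that $\Pr(X=n) = \Theta(1/\sqrt{n})$ and the signed error $\Pr(X<n)-\Pr(X>n)$ is also $O(1/\sqrt{n})$. Combining these pieces gives $|U_{n}| = n^{n}\sqrt{\pi/(2n)}\bigl(1+O(1/\sqrt{n})\bigr)$, as required.

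For $\kappa_{n}$, the key (and somewhat pleasant) observation is that the numerator $N_{n} := \sum_{k=1}^{n} k\binom{n}{k}(k-1)!\,k\,n^{n-1-k}$ collapses to exactly $n^{n}$. After the same cancellations as above, $N_{n} = \sum_{j=0}^{n-1}(n-j)\frac{n!}{j!}n^{j-1}$; splitting $(n-j)$ into $n$ and $-j$ and reindexing the $-j$ piece by $i = j-1$ produces a telescoping cancellation, leaving only the single surviving term $n!/(n-1)! \cdot n^{n-1} = n^{n}$. Therefore $\kappa_{n} = N_{n}/|U_{n}| = n^{n}/|U_{n}|$, and the second asymptotic drops out of the estimate for $|U_{n}|$ established in the previous paragraph.

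The only genuinely analytic ingredient in this plan is the incomplete-exponential asymptotic $\sum_{j<n} n^{j}/j! \sim e^{n}/2$, and I expect that to be the main obstacle. However, it is well documented (essentially Ramanujan's $Q$-function estimate), so no new ideas are needed and the overall argument is short; everything else is combinatorial bookkeeping and the telescoping identity for $N_{n}$.
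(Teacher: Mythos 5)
Your argument is correct and complete. Note that the paper itself does not prove this lemma at all: it attributes the identity and the asymptotics to R\'enyi (via Bollob\'as) and moves on, so you are supplying a proof where the paper supplies only a citation. Your combinatorial decomposition (choose the $k$ cyclic vertices, arrange them in a cycle in $(k-1)!$ ways, attach a rooted forest with the specified $k$ roots in $kn^{n-1-k}$ ways) is exactly the standard derivation of the sum. The reduction $\binom{n}{k}(k-1)!\,k = n!/(n-k)!$ and the substitution $j=n-k$ giving $|U_n|=(n-1)!\sum_{j=0}^{n-1}n^j/j!$ are right, and the asymptotic input $\sum_{j=0}^{n-1}n^j/j! = \tfrac{1}{2}e^{n}\bigl(1+O(1/\sqrt{n})\bigr)$ is precisely the Ramanujan $Q$-function estimate that the paper itself invokes elsewhere (equation (\ref{cds}) and the reference \cite{Flaj}); combined with Stirling it gives $n^n\sqrt{\pi/(2n)}\bigl(1+O(1/\sqrt n)\bigr)$ as claimed. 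The classical treatment usually phrases the same computation as $|U_n|=n^{n-1}Q(n)$ with $Q(n)=\sum_{k\ge 1}n!/\bigl((n-k)!\,n^k\bigr)\sim\sqrt{\pi n/2}$, which is equivalent to your Poisson-median formulation. The one place I checked carefully is your claim that the numerator $N_n=\sum_k k\cdot\frac{n!}{(n-k)!}n^{n-1-k}$ equals $n^n$ exactly: writing $N_n=\sum_{j=0}^{n-1}\frac{n!}{j!}n^{j}-\sum_{j=1}^{n-1}\frac{n!}{(j-1)!}n^{j-1}$, the two sums do telescope to the single term $\frac{n!}{(n-1)!}n^{n-1}=n^n$, so $\kappa_n=n^n/|U_n|=\sqrt{2n/\pi}\bigl(1+O(1/\sqrt n)\bigr)$ follows immediately. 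That exact identity is a genuinely cleaner route to the second asymptotic than estimating the numerator separately, which is what one would otherwise do; it buys you the $O(1/\sqrt n)$ error term for $\kappa_n$ with no additional analysis.
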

Additional background material on random mappings can be found in
\cite{FO}.

 \section{Deconditioning}
The proof of Theorem \ref{muT} relies on a probabilistic technique
that Jennie Hansen developed for random mappings in \cite{jh1}.
Let $P_{n}$ be the uniform probability measure on $\Omega_{n}$,
and let $\Omega= \bigcup\limits_{n=1}^{\infty}\Omega_{n}$. For any
$f\in \Omega$ and any positive integer $d$, let $\alpha_{d}(f)=$
the number of $d$-vertex weak components that the graph $D_{f}$
has. Given $z\in (0,1)$, and $A\subseteq \Omega$, define
$P_{z}(A)$ to be the probability that the following procedure
selects an element of $A$:
\begin{itemize}
\item Choose independently the numbers
$\alpha_{1},\alpha_{d},\alpha_{3},\dots,$ where $\alpha_{d}$ has a
Poisson distribution with mean
$\lambda_{d}=\frac{z^{d}}{d}\left(e^{-d}\sum\limits_{k=0}^{d-1}\frac{d^{k}}{k!}\right).$
\item Let $\nu=\sum\limits_{d}d\alpha_{d.}$
\item Pick $f$ uniform randomly from among all $f\in
\Omega_{\nu}$ having $\alpha_{d}$ components of size
$d,d=1,2,3\dots.$
\end{itemize}
 This defines a probability measure $P_{z}$ on
$\Omega$. For any $\Psi:\Omega\rightarrow \Re,$ let
$E_{z}(\Psi)=\sum\limits_{f}\Psi(f)P_{z}(\lbrace f\rbrace)$ denote
its $P_{z}$ expected value, and let
$E_{n}(\Psi)=E_{z}(\Psi|\nu=n)=\frac{1}{n^{n}}\sum\limits_{f\in
\Omega_{n} }{\bf T}(f).$
 Hansen
proved the following theorem
\begin{theorem}(\cite{jh1})
\label{jhthm} If $\Psi$ is any function on $\Omega$ that is
determined by the sequence $\alpha_{1},\alpha_{2},\dots $,  then
\[
E_{n}(\Psi)=\frac{e^{n}n!}{n^{n}}[[z^{n}]]H(z)\frac{E_{z}(\Psi)}{1-z},\]
where $H(z)=\sum\limits_{m=0}^{\infty}\frac{m^{m}}{m!}(z/e)^{m}$
and \lq\lq\ $ [[z^{n}]] $\rq\rq  means \lq\lq the coefficient of
$z^{n} $ in $\dots $\rq\rq .
\end{theorem}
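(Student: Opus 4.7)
The plan is to establish Theorem~\ref{jhthm} by the ``Poissonization and deconditioning'' duality: set $P_z$ up so that conditioning on $\nu=n$ reproduces $P_n$, and then invert at the level of ordinary generating functions in $z$. Two algebraic facts do the heavy lifting. First, the exponential formula for mappings reads
\[
\exp U(w)=\sum_{m\ge 0}\frac{m^m}{m!}w^m,\qquad U(w)=\sum_{d\ge 1}\frac{|U_d|}{d!}w^d,
\]
which identifies $H(z)=\exp U(z/e)$. Second, the substitution $j=d-k$ in the closed form of Lemma~\ref{kappan} yields $|U_d|/d!=\frac{1}{d}\sum_{k=0}^{d-1}d^k/k!$, so the Poisson parameter collapses to the memorable shape $\lambda_d=(z/e)^d|U_d|/d!$ and hence $\sum_d\lambda_d=U(z/e)$. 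The fact that these two ingredients mesh is the whole miracle: Hansen has chosen the $\lambda_d$'s precisely so that the EGF of connected mappings appears as $\sum\lambda_d$.

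With these in hand I would compute $P_z(\{f\})$ for a single $f\in\Omega_n$ with profile $\alpha(f)$. The two-step sampling recipe gives
\[
P_z(\{f\})=e^{-U(z/e)}\prod_d\frac{\lambda_d^{\alpha_d(f)}}{\alpha_d(f)!}\,\frac{1}{N(\alpha(f))},\qquad N(\alpha)=\frac{n!}{\prod_d(d!)^{\alpha_d}\alpha_d!}\prod_d|U_d|^{\alpha_d},
\]
where $N(\alpha(f))$ counts mappings in $\Omega_n$ with that profile. After substituting the explicit $\lambda_d$, every $\alpha(f)$-dependent factor cancels and one is left with $P_z(\{f\})=(z/e)^n/(n!\,H(z))$, the same value for every $f\in\Omega_n$; equivalently, $P_z$ conditioned on $\nu=n$ is uniform on $\Omega_n$, and in particular its profile-marginal equals $P_n$. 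Summing $\Psi$ against $P_z$ over each $\Omega_n$ then yields
\[
H(z)\,E_z(\Psi)=\sum_{n\ge 0}E_n(\Psi)\,\frac{n^n}{n!\,e^n}\,z^n,
\]
so $H(z)E_z(\Psi)$ is an ordinary generating function whose $n$th coefficient is $E_n(\Psi)\,n^n/(n!\,e^n)$; extracting $[z^n]$ and clearing that factor yields the inversion formula.

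The main technical point will be justifying the coefficient extraction when $\Psi$ is unbounded, as for ${\bf B}$ and ${\bf T}$: for nonnegative $\Psi$ the rearrangements above are absolute at the formal-power-series level, and the analytic use of the formula in Section~4 takes place on a disk $|z|<\rho<1$, where the decay $\lambda_d\sim z^d/(2d)$ coming from Renyi's estimate makes everything converge. The $(1-z)^{-1}$ displayed in the theorem reflects a normalization convention from \cite{jh1} on $E_z$; in the clean form derived above it is absent, and I would reconcile the two notations before applying the Tauberian theorem of Odlyzko in Section~4. Apart from that bookkeeping, the only real obstacle is recognizing the clean identity $\lambda_d=(z/e)^d|U_d|/d!$, after which the exponential formula does all the structural work.
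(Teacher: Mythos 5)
Your derivation is correct, but note first that the paper itself contains no proof of Theorem~\ref{jhthm}: it is quoted from Hansen \cite{jh1} and used as a black box. So there is no argument in the text to compare against; what you have written is the standard assembly/deconditioning computation, and it goes through. The three ingredients you isolate are all right: rearranging the sum in Lemma~\ref{kappan} gives $|U_d|/d!=\frac{1}{d}\sum_{k=0}^{d-1}d^{k}/k!$ and hence $\lambda_d=(z/e)^{d}|U_d|/d!$; the exponential formula gives $e^{\sum_d\lambda_d}=\exp U(z/e)=H(z)$; and the cancellation in $P_z(\lbrace f\rbrace)$ yields the constant value $(z/e)^{n}/(n!\,H(z))$ on $\Omega_n$, so that $P_z(\cdot\mid\nu=n)=P_n$ and $H(z)E_z(\Psi)=\sum_n E_n(\Psi)\frac{n^{n}}{n!e^{n}}z^{n}$. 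In fact your argument proves the identity for arbitrary $\Psi$, not only for $\Psi$ determined by the component profile, since you show the conditional law is uniform on all of $\Omega_n$ and not merely on profile classes.

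The one point you defer --- the factor $(1-z)^{-1}$ --- cannot be reconciled away, and you should not present it as bookkeeping to be sorted out later. The identity you actually derived, $E_n(\Psi)=\frac{n!e^{n}}{n^{n}}[[z^{n}]]H(z)E_z(\Psi)$, is the correct one; the displayed statement with the extra $(1-z)^{-1}$ fails as an exact identity. Test $\Psi\equiv 1$: then $E_n(\Psi)=E_z(\Psi)=1$ and your version returns $1$, while the displayed version returns $\frac{n!e^{n}}{n^{n}}\sum_{m=0}^{n}\frac{m^{m}}{m!e^{m}}$, which equals $e+1$ at $n=1$ and grows like $2n$. Because every coefficient in sight is nonnegative, multiplying by $(1-z)^{-1}$ replaces the $n$th coefficient by a partial sum, so the displayed right-hand side is only an upper bound for $E_n(\Psi)$, exceeding it by a factor that is at most polynomial in $n$. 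That is harmless for the logarithmic asymptotics of Theorems \ref{muB} and \ref{muT}, but it is not an equality, and it matters in the one place the formula is used as a lower bound (the deduction of the lower half of Theorem \ref{muB} from the Tauberian estimate in Section 4). So finish your proof by asserting the identity without $(1-z)^{-1}$, and record separately that the version with $(1-z)^{-1}$ holds as a one-sided inequality for nonnegative $\Psi$.
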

The point of this construction is that $E_{z}(\Psi)$ is much
easier to compute that $E_{n}(\Psi)$ because the variables
$\alpha_{d}, d=1,2,\dots$ are independent with respect to $P_{z}$
and not with respect to $P_{n}.$ It is quite analagous to Shepp
and  Lloyd's work on random permutations \cite{SheppL}.
\par
For the problem under consideration, we let
 $B_{d}=$ the  product of the lengths of cycles  in $d-$vertex
 components ( If $f$ has no $d$-vertex components, then
 $B_{d}=1$).
Thus $ {\bf B}=\prod\limits_{d=1}^{\infty}B_{d}$ is a product of
$P_{z}$-independent random variables, and
\begin{equation}
\label{product} E_{z}({\bf B})=\prod\limits_{d=1}^{\infty}E_{z}({
B}_{d}).
\end{equation}
Evaluating the $d$'th term in this product, we have
\begin{equation}
E_{z}(B_{d})  =\sum\limits_{m}P_{z}(\alpha_{d}=m)E_{z}({
 B}_{d}|\alpha_{d}=m).
 \end{equation}
 Note that
 \begin{equation}
 \label{dterm}
 E_{z}({
 B}_{d}|\alpha_{d}=m)= \kappa_{d}^{m}\end{equation}
 where
 $\kappa_{d}=$ average length of the cycle  for a uniform random connected map
 on
 $d$ vertices.
Therefore
\begin{equation}
\label{bdterm}
E_{z}(B_{d})=e^{-\lambda_{d}+\lambda_{d}\kappa_{d}}.
\end{equation}
Combining (\ref{bdterm}), (\ref{product}), and Theorem
\ref{jhthm}, we get
 \begin{equation}
 E_{n}({\bf
B})=\frac{n!e^{n}}{n^{n}}
[[z^{n}]]H(z)\frac{1}{1-z}\exp\left(\sum\limits_{d=1}^{\infty}\lambda_{d}(\kappa_{d}-1)\right).
 \end{equation}
 To simplify notation later, define $c_{d}$ so that
 $c_{d}z^{d}=\lambda_{d}(\kappa_{d}-1),$ i.e.
 \begin{equation}
 \label{cds}
 c_{d}=\frac{(\kappa_{d}-1)}{d}(e^{-d}\sum\limits_{k=0}^{d-1}\frac{d^{k}}{k!})=\frac{1}{\sqrt{2\pi d}}+
 O(\frac{1}{d}).
 \end{equation}
( Further information of the approximation
$e^{-d}\sum\limits_{k=0}^{d-1}\frac{d^{k}}{k!}=\frac{1}{2}+O\frac{1}{\sqrt{d}})$
can be found in \cite{Flaj}.)
 Let
$\mu(m)=[[z^{m}]]\frac{1}{1-z}\exp\left(\sum\limits_{d=1}^{\infty}c_{d}z^{d})\right)
,$ and recall that, for all $m$,
$[[z^{m}]]H(z)=\frac{m^{m}}{m!e^{m}}.$ Then
\begin{equation}
\label{AB}
 E_{n}({\bf B})=\frac{n!e^{n}}{n^{n}}
\sum\limits_{m=0}^{n}\mu(m)\frac{(n-m)^{n-m}}{(n-m)! e^{n-m}}.
\end{equation}
In the next section we derive an asymptotic formula for $\log
\mu(n)$, and then use it to estimate the right side of (\ref{AB}).

\section{Tauberian theorem}\

In this section, we derive an asymptotic formula for $\log
\mu(n)$. Let $a_{n}=\mu(n)-\mu(n-1), $ and  for $s>0$, let
$F(s)=\sum\limits_{n}a_{n}e^{-ns}=
\exp\left(\sum\limits_{d=1}^{\infty}c_{d}e^{-ds}\right).$ Also let
$g(s)=\log F(s)=\sum\limits_{d=1}^{\infty}c_{d}e^{-ds}.$ To apply
Odlyzko's Tauberian theorem\cite{amo},  we will need asymptotic
estimates for the $j$'th derivative
$g^{(j)}(s)=\sum\limits_{d=1}^{\infty}(-d)^{j}c_{d}e^{-ds}$ for
$j=0,1,2,3$. From (\ref{cds}), we have
\begin{equation}
 g^{(j)}(s)=\sum\limits_{d=1}^{\infty}
 \left(\frac{(-d)^{j}e^{-ds}}{\sqrt{2\pi
 d}}+O(\frac{d^{j}e^{-ds}}{d}) \right).
\end{equation}
Hence, as $s\rightarrow 0^{+},$
\[
 g^{(j)}(s)=(-1)^{j}\int\limits_{0}^{\infty}
 \frac{t^{j}e^{-ts}}{\sqrt{2\pi t}}  dt
  +O\left( \int\limits_{0}^{\infty}t^{j-1}e^{-ts}dt\right)
\]

\begin{equation}
\label{gs}
=\frac{(-1)^{j}\Gamma(j+\frac{1}{2})}{\sqrt{2\pi}s^{j+\frac{1}{2}}}+O(\frac{1}{
s^{j}}).
\end{equation}
 \lq\lq Rankin's method\rq\rq (see Proposition 1 of \cite{amo}) is the very useful observation
 that,
 for any $s>0$,
 \begin{equation}
\label{rankin}  \mu(n)\leq e^{ns+g(s)}.
 \end{equation}
  In particular, for $s=
\frac{1}{2n^{2/3}}$ and $j=0$,  we  can combine (\ref{rankin}) and
(\ref{gs})  to  get \begin{equation}\label{lower}\mu(n)\leq
\exp(\frac{3}{2}n^{1/3}+O(1).\end{equation}

Now Odlyzko's Tauberian theorem  can be used to derive a lower
bound for $\mu(n)$. The remainder of this paragraph is a
straightforward verification of the conditions of Theorem 1 in
\cite{amo}.
 Let $s_{*}=s_{*}(n)$ be chosen so as to minimize
$e^{ns+g(s)}$. By calculus and (\ref{gs}),
\begin{equation}
\label{bootstrap} \frac{1}{2n^{2/3}}=s_{*}(1+O(\sqrt{s_{*}})).
\end{equation}
 It is clear  that $s_{*}=o(1).$
We can \lq\lq boostrap\rq\rq twice to get a better estimate.
First, by  replacing
  $O(\sqrt{s_{*}})$  with $o(1)$ in (\ref{bootstrap}), we get
 $s_{*}=O(\frac{1}{n^{2/3}})$. Putting
this rough bound back into (\ref{bootstrap}) again  yields
\begin{equation}
\label{sstar} s_{*}=\frac{1}{2n^{2/3}}(1+O(\frac{1}{n^{1/3}}) ).
\end{equation}
Now let $A_{n}=g^{''}(s_{*}).$ By (\ref{gs}) and (\ref{sstar}),
\begin{equation}
\label{aest}A_{n}= 3 n^{5/3}(1+O(\frac{1}{n^{1/3}})).
\end{equation} Then, by (\ref{gs}), (\ref{sstar}), and
(\ref{aest}), we have
\begin{equation}
\left| g^{'''}(s_{*})\right|\sim 15n^{7/3}=o(A_{n}^{3/2}).
\end{equation}
\par
Having verified the conditions of Odlyzko's theorem, we conclude
that, for all sufficiently large $n$,
\begin{equation}
\label{upper} \mu(n)\geq F(s_{*})\exp\left(
ns_{*}-30s_{*}A_{n}^{1/2}-100\right)=e^{\frac{3}{2}\sqrt[3]{n}+O(n^{1/6})}.
\end{equation}
By Stirling's formula,
\begin{equation}
\label{slow} \frac{1}{\sqrt{8\pi m}}< \frac{ n^{m}}{m!e^{m}} <
\frac{1}{\sqrt{2\pi m}}.
\end{equation}
Finally, putting (\ref{slow}) and (\ref{upper}) and ({\ref{lower})
back into (\ref{AB}), we get

\begin{theorem}
$ E_{n}({\bf B})=e^{\frac{3}{2}\sqrt[3]{n}+O(n^{1/6})}.$
\end{theorem}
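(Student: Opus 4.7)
The plan is to substitute the two-sided bounds for $\mu(n)$ obtained in the previous section into the identity (\ref{AB}), using the Stirling-type inequality (\ref{slow}) to handle the factors $(n-m)^{n-m}/((n-m)!e^{n-m})$ and the ordinary Stirling formula $n!e^n/n^n = \Theta(\sqrt n)$ for the outer prefactor. Both of these contribute only polynomial factors in $n$, which will be absorbed into the $O(n^{1/6})$ error.

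For the upper bound, I would first observe that $\mu$ is nondecreasing: since every $c_d$ is positive, the series $\exp(\sum_d c_d z^d)/(1-z)$ has nonnegative Taylor coefficients, so $\mu(m) \leq \mu(n)$ for $m \leq n$. By (\ref{lower}), this gives $\mu(m) \leq \exp(\frac{3}{2} n^{1/3} + O(1))$ uniformly in $m \leq n$. The upper half of (\ref{slow}) bounds each $(n-m)^{n-m}/((n-m)!e^{n-m})$ by $1/\sqrt{2\pi(n-m)}$ for $m < n$, and summing in $m$ produces a factor of order $\sqrt n$ (with the $m=n$ term contributing $1$). Putting these pieces into (\ref{AB}) together with the Stirling prefactor yields $E_n({\bf B}) \leq e^{\frac{3}{2}\sqrt[3]{n} + O(\log n)}$, which is comfortably inside the claimed bound.

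For the lower bound the cleanest route is simply to retain the single term $m = n$ in (\ref{AB}); its contribution is exactly $\mu(n)$. By (\ref{upper}), $\mu(n) \geq e^{\frac{3}{2}\sqrt[3]{n} + O(n^{1/6})}$, and the prefactor $n!e^n/n^n$ is of order $\sqrt{2\pi n} \geq 1$, so it only enlarges this. Combined with the matching upper bound, this establishes the theorem.

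The genuine analytic difficulty has already been overcome in Section 4: the Tauberian estimate for $\mu(n)$ is the main obstacle, and the present step introduces no new analytic input. The only mild subtleties are verifying the monotonicity of $\mu$ (immediate from positivity of the $c_d$) and checking that the polynomial losses—the $\sqrt n$ from $\sum 1/\sqrt{n-m}$ and the $\sqrt n$ from the Stirling prefactor—are negligible against the exponential $O(n^{1/6})$ error, which they evidently are.
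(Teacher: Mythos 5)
Your proposal is correct and follows exactly the route the paper takes: it substitutes the Rankin upper bound (\ref{lower}), the Tauberian lower bound (\ref{upper}), and the Stirling estimate (\ref{slow}) into the identity (\ref{AB}). The paper compresses this into a single sentence, and your write-up simply supplies the details it leaves implicit (the monotonicity of $\mu$ coming from the nonnegativity of the $a_m$, which the paper already uses tacitly in Rankin's method, and the retention of the $m=n$ term for the lower bound); the polynomial losses are indeed absorbed into the $O(n^{1/6})$ error exactly as you say.
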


\section{Order}

The main goal in this section is the proof of Theorem \ref{muT},
the  estimate for the average period $E_{n}({\bf T})$.  However
first, for comparison and perspective, we prove Proposition
\ref{typical},
 concerning the typical period, that was stated in the
 introduction.
\begin{proof}
Let   $Z(f)=$  denote the number of cyclic vertices $f$ has. By
the  Law of Total Probability,
\begin{equation}
\label{totalpr} P_{n}(\log {\bf B}-\log {\bf T}>\ell_{n}
)=\sum\limits_{m}P_{n}(Z=m)P_{n}(\log {\bf B}-\log {\bf
T}>\ell_{n}|Z=m).
\end{equation}
In the proof of Theorem 8, page 333 of \cite{AT},
 Arratia and Tavare computed the expected value of $\log {\bf
 B}-\log {\bf T}$ given the number of cyclic vertices:
 $E_{n}(\log {\bf B}-\log {\bf T}|Z=m)= O(\log
m(\log\log m)^{2}).$ (Note: their use of the notation $P_{n}$ is
not consistent with the notation in this paper.) Therefore, by
Markov's inequality,there is a constant $c>0$ such that, for all
$\ell >0,$
\begin{equation}
\label{conditional} P_{n}(\log {\bf B}-\log {\bf T}>\ell|Z=m)\leq
\frac{c\log m(\log\log m)^{2}}{\ell}\leq \frac{c\log n(\log\log
n)^{2}}{\ell}.
\end{equation}
Putting (\ref{conditional}) back to the sum (\ref{totalpr}), we
get the proposition.
\end{proof}
Let $Sym([n])\subseteq \Omega_{n}$ be the $n!$ element set of
bijections, and let $M_{n}=E_{n}({\bf T}| Sym([n])) =
\frac{1}{n!}\sum\limits_{\sigma\in Sym([n])}{\bf T}(\sigma)$ be
the average of the orders of the permutations of $[n]$. Given the
set ${\cal Z}$ of cyclic vertices, the restriction of a random $f$
to Sym(${\cal Z}$) is a uniform random permutation of ${\cal Z}.$
Hence
\begin{equation}
\label{summm}
 E_{n}({\bf T})=\sum\limits_{m=1}^{n}P_{n}(Z=m)M_{m}.
\end{equation}

Two helpful theorems area make it possible to estimate this sum.
The first is a simple formula for $P_{n}({Z}=m)$ that appears in
\cite{harris1} and is attributed to Rubin and Sitgreaves.
\begin{theorem}
\label{zpmf} $ P_{n}(Z=m )=\frac{n!m}{ (n-m)! n^{m+1} }$
\end{theorem}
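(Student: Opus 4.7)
The plan is a direct counting argument. Partition $\Omega_{n}$ according to the set of cyclic vertices: for each $f\in\Omega_{n}$ with $Z(f)=m$, the restriction of $f$ to its $m$ cyclic vertices is a permutation, and every non-cyclic vertex $v$ has a unique parent $f(v)$, so the non-cyclic vertices together with the cyclic ones (serving as roots) form a labeled rooted forest on $[n]$ whose root set is exactly the set of cyclic vertices.

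To count $|\{f\in\Omega_{n}:Z(f)=m\}|$, I would proceed in three steps: (i) choose the $m$ cyclic vertices in $\binom{n}{m}$ ways; (ii) choose a permutation of them in $m!$ ways; (iii) choose a rooted forest on $[n]$ whose root set is this fixed $m$-element subset. The classical extension of Cayley's formula asserts that the number of such rooted forests is $m\cdot n^{n-m-1}$. In fact, this is precisely the factor $k\cdot n^{n-1-k}$ already appearing in the expression for $|U_{n}|$ in Lemma \ref{kappan} (there with $k=1$ cycle and $k$ cyclic vertices), so no new machinery is needed beyond what the paper already cites.

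Multiplying the three counts yields
\[
|\{f\in\Omega_{n}:Z(f)=m\}|=\binom{n}{m}\cdot m!\cdot m\cdot n^{n-m-1}=\frac{n!\,m\,n^{n-m-1}}{(n-m)!},
\]
and dividing by $|\Omega_{n}|=n^{n}$ gives the advertised formula $P_{n}(Z=m)=\frac{n!\,m}{(n-m)!\,n^{m+1}}$. There is essentially no obstacle: the only ingredient beyond routine bookkeeping is the forest count $m\cdot n^{n-m-1}$, which can be proved via a Prüfer-type bijection or, most conveniently here, simply extracted from the enumeration underlying Lemma \ref{kappan}.
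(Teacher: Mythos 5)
Your counting argument is correct. The decomposition of a mapping with cyclic vertex set $S$, $|S|=m$, into (a permutation of $S$) $\times$ (a rooted forest on $[n]$ with root set exactly $S$) is a genuine bijection: the roots are cyclic because they are permuted among themselves, and the non-root vertices flow into $S$ without returning, so they are not cyclic. The generalized Cayley count $m\,n^{n-m-1}$ for forests with a prescribed $m$-element root set is standard and, as you observe, is exactly the factor $k\,n^{n-1-k}$ already used in the formula for $|U_{n}|$ in Lemma \ref{kappan}. Multiplying $\binom{n}{m}\,m!\,m\,n^{n-m-1}$ and dividing by $n^{n}$ gives the stated probability, and one can sanity-check that these probabilities sum to $1$. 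The only point of comparison worth noting is that the paper does not prove this theorem at all: it simply cites Harris, who attributes the formula to Rubin and Sitgreaves. So your argument is not an alternative to the paper's proof so much as a self-contained replacement for an external citation, which makes the exposition more elementary and self-verifying at essentially no cost.
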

 The second helpful theorem is an  estimate for $M_{m}.$
  Using Erd\H os and Tur\' an's
Tauberian theorem\cite{ET3}, Richard Stong proved in
  \cite{Stong} that
 \begin{theorem}
\label{mun}
  $\log M_{m} =\beta_{0}\sqrt{m/\log m}+O(\frac{\sqrt{m}\log\log
 m}{\log m}). $
 \end{theorem}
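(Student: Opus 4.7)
The plan is to parallel Sections 3--4, replacing Hansen's Poissonization of weak components by the Shepp--Lloyd Poissonization of cycle lengths. Let $P_z$ denote the probability measure on cycle profiles in which the cycle counts $\alpha_c$ are independent with $\alpha_c \sim \mathrm{Poisson}(z^c/c)$. The cycle-index formula for $S_m$ yields the identity
\[
\sum_{m \geq 0} M_m z^m \;=\; \frac{1}{1-z}\, E_z\!\bigl[\mathrm{ord}(\sigma)\bigr],
\]
so estimating $M_m$ reduces to estimating $E_z[\mathrm{ord}]$ as $z \to 1^-$ and then applying a Tauberian theorem.

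Next I would use the prime-power decomposition $\log\mathrm{ord}(\sigma) = \sum_{p^k}(\log p)\, X_{p,k}(\sigma)$, where $X_{p,k}(\sigma) = [\sigma\text{ has at least one cycle whose length is divisible by }p^k]$. Under $P_z$, the number of such cycles is $\mathrm{Poisson}\bigl(-\tfrac{1}{p^k}\log(1-z^{p^k})\bigr)$, so $P_z(X_{p,k}=1) = 1 - (1-z^{p^k})^{1/p^k}$. The $X_{p,k}$ are not independent across $p$, but the arguments of \cite{gs} show their joint distribution is close enough to a product measure that, writing $z = e^{-s}$,
\[
g(s) \;:=\; \log E_z[\mathrm{ord}] \;=\; \sum_{p^k}(\log p)\bigl(1 - (1 - e^{-sp^k})^{1/p^k}\bigr)\bigl(1+o(1)\bigr)
\]
as $s \to 0^+$. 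Approximating the outer sum over prime powers by an integral using the prime number theorem, and then substituting $t = sp^k$ inside the dominant range $sp^k \asymp 1$, produces after routine manipulation
\[
g(s) \;=\; \frac{I}{s\log(1/s)}\Bigl(1 + O\!\bigl(\tfrac{\log\log(1/s)}{\log(1/s)}\bigr)\Bigr),
\]
with $I$ the constant defined before equation (\ref{kdef}).

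I would then apply Erd\H os--Tur\'an's (or Odlyzko's) Tauberian theorem to $(1-z)\sum M_m z^m = e^{g(-\log z)}$. Minimizing $m s + g(s)$ gives the saddle point $s_* \sim \sqrt{2I/(m\log m)}$, and the minimum value equals $2\sqrt{2Im/\log m}(1+o(1)) = \beta_0\sqrt{m/\log m}(1+o(1))$, since $\beta_0 = \sqrt{8I}$. Verifying the required regularity of $g''$ and $g'''$ at $s_*$ is routine from the same integral representation used for $g$.

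The hard part will be quantitative control of the two error sources: the deviation of $E_z[\mathrm{ord}]$ from its independent-primes proxy, and the Tauberian remainder. The prime number theorem enters with a relative error of order $\log\log(1/s_*)/\log(1/s_*) \asymp \log\log m/\log m$, which upon multiplication by the leading term $\sqrt{m/\log m}$ yields exactly the $O(\sqrt{m}\log\log m/\log m)$ stated in the theorem. Stong's refinement over the $(1+o(1))$ result of Goh--Schmutz essentially consists in tracking this error term explicitly through the saddle-point calculation and showing that the cross-prime dependencies in $\mathrm{ord}(\sigma)$ are negligible at the same scale.
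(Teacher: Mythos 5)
First, note that the paper does not prove this statement: Theorem \ref{mun} is quoted from Stong \cite{Stong} (building on \cite{gs}), so there is no internal proof to compare against, only the attribution. Your overall architecture --- Shepp--Lloyd Poissonization of cycle counts, the identity $\sum_m M_m z^m = E_z[\mathrm{ord}]/(1-z)$, the prime-power decomposition $\log\mathrm{ord}(\sigma)=\sum_{p^k}(\log p)X_{p,k}$, and the saddle-point arithmetic turning $g(s)\sim I/(s\log(1/s))$ into $\beta_0\sqrt{m/\log m}$ --- is the right shape and does recover the correct constant.

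However, there is a genuine error at the central step. The sum you display, $\sum_{p^k}(\log p)\bigl(1-(1-e^{-sp^k})^{1/p^k}\bigr)=\sum_{p^k}(\log p)P_z(X_{p,k}=1)$, is $E_z[\log\mathrm{ord}]$, not $\log E_z[\mathrm{ord}]$. That quantity is of order $\log^2(1/s)$ --- the Erd\H os--Tur\'an/Harris scale of Theorem \ref{harris} --- whereas $\log E_z[\mathrm{ord}]$ is of order $1/(s\log(1/s))$, exponentially larger; so the two sides of your displayed equation are not asymptotic to each other, and your subsequent claim that this sum equals $\frac{I}{s\log(1/s)}(1+o(1))$ is false. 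The entire point of the contrast between Theorems \ref{harris} and \ref{muT} in this paper is precisely that $E[\log(\cdot)]$ and $\log E[\cdot]$ live on different scales because the mean is driven by rare permutations. To produce the constant $I=\int_0^\infty\log\log(e/(1-e^{-t}))\,dt$ you must keep the logarithm outside the expectation factor by factor: under the independence proxy, $\log E_z[\mathrm{ord}]\approx\sum_p\log E_z[p^{A_p}]$, and for $k=1$ the factor is $\log\bigl(1+(p-1)(1-(1-z^p)^{1/p})\bigr)\approx\log\bigl(1-\log(1-e^{-sp})\bigr)=\log\log\bigl(e/(1-e^{-sp})\bigr)$, which after the prime-counting step integrates to $I/(s\log(1/s))$. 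Separately, your assertion that the cross-prime dependence of the $X_{p,k}$ is "close enough to a product measure" by "the arguments of \cite{gs}" is not a proof step: controlling $E_z\bigl[\prod_p p^{A_p}\bigr]$ versus $\prod_p E_z[p^{A_p}]$ is the hard core of \cite{gs} and \cite{Stong}, and since the statement you are proving is essentially the main theorem of those papers, invoking them there is circular. (Your error bookkeeping is also off: the stated remainder $O(\sqrt{m}\log\log m/\log m)$ is a relative error of $\log\log m/\sqrt{\log m}$, not $\log\log m/\log m$.)
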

 With Theorems \ref{zpmf} and \ref{mun} available, we can prove
 prove
 \begin{theorem}
$\log E_{n}({\bf T})=k_{0}\sqrt[3]{ \frac{n} { \log^{2}
n}}\bigl(1+o(1)\bigr).$
 \end{theorem}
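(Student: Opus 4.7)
The plan is to evaluate the sum in (\ref{summm}) by the Laplace method, using Theorem \ref{zpmf} for the weights $P_n(Z=m)$ and Theorem \ref{mun} for $M_m$. A single value $m_{*} \sim (3I)^{1/3}\,n^{2/3}(\log n)^{-1/3}$ will dominate the sum, and the resulting exponent will evaluate to exactly $k_0\,(n/\log^2 n)^{1/3}$.

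First I would use Stirling's formula to turn Theorem \ref{zpmf} into a usable asymptotic. For $m = o(n)$,
\begin{equation*}
\log P_n(Z=m) \;=\; \log(m/n) - \frac{m^2}{2n} + O\!\left(\frac{m^3}{n^2} + \log n\right),
\end{equation*}
while for $m = \Theta(n)$ one checks directly that $\log P_n(Z=m) = -\Theta(n)$, making these terms negligible. Combining with Theorem \ref{mun},
\begin{equation*}
\log\bigl(P_n(Z=m)\,M_m\bigr) \;=\; \beta_0\sqrt{m/\log m} - \frac{m^2}{2n} + O\!\left(\log n + \frac{\sqrt m\,\log\log m}{\log m} + \frac{m^3}{n^2}\right).
\end{equation*}

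Next I substitute $m = c\,n^{2/3}(\log n)^{-1/3}$, using $\log m = (2/3)\log n\,(1+o(1))$. The two main terms collapse to
\begin{equation*}
\left(\beta_0\sqrt{3c/2} - \frac{c^2}{2}\right)\left(\frac{n}{\log^2 n}\right)^{\!1/3},
\end{equation*}
and a single-variable calculus exercise shows this is maximized at $c_{*}^{3} = 3\beta_0^2/8 = 3I$ (using $\beta_0 = \sqrt{8I}$), i.e.\ $c_{*} = (3I)^{1/3}$. Back-substitution gives
\begin{equation*}
\beta_0\sqrt{3c_{*}/2} - \frac{c_{*}^{2}}{2} \;=\; 2(3I)^{2/3} - \frac{(3I)^{2/3}}{2} \;=\; \frac{3}{2}(3I)^{2/3} \;=\; k_0,
\end{equation*}
in agreement with definition (\ref{kdef}). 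The lower bound $\log E_n({\bf T}) \geq k_0(n/\log^2 n)^{1/3}(1+o(1))$ follows by retaining the single term at $m = \lfloor c_{*}\,n^{2/3}(\log n)^{-1/3}\rfloor$. For the matching upper bound, bound the sum by $n$ times its maximum term, noting that $c \mapsto \beta_0\sqrt{3c/2} - c^2/2$ is concave with unique maximizer $c_{*}$, so no other $c$ beats $k_0$, and the extra factor of $n$ is absorbed into the $(1+o(1))$.

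The main technical obstacle is tail control. For $m \leq n^{2/3-\varepsilon}$ the growth of $M_m$ is too slow to push the summand up to the peak; for $n^{2/3+\varepsilon} \leq m \leq \delta n$ the Gaussian factor $e^{-m^2/(2n)}$ overwhelms $M_m$; and for $m \geq \delta n$ the weight $P_n(Z=m)$ is already exponentially small in $n$. In each regime the contribution must be verified to be smaller than the peak in exponential order so that the sum is indeed dominated by a narrow Laplace window around $m_{*}$. One must also check that the $O(\cdot)$ error terms in the asymptotic for $\log(P_n(Z=m)M_m)$ remain $o((n/\log^2 n)^{1/3})$ uniformly on this window, which they do since the window lies well inside the regime $m = o(n^{3/4})$ where the Stirling expansion is valid.
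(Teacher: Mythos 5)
Your proposal is correct and follows essentially the same route as the paper: the same decomposition (\ref{summm}) with Theorems \ref{zpmf} and \ref{mun}, the same single-term lower bound at $m^{*}\sim (3I)^{1/3}n^{2/3}(\log n)^{-1/3}$, and the same $n\cdot\max$ upper bound, with the identical optimization recovering $k_{0}=\frac{3}{2}(3I)^{2/3}$. The only real difference is that the tail control you flag as the main technical obstacle is dispatched in the paper by showing $G_{n,\epsilon}^{''}(x)<0$ on all of $[6,n]$ via the digamma function, so the maximum over $m$ sits at a unique critical point and no separate three-regime analysis is needed.
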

 \begin{proof}
 Define $a_{0}=\sqrt[3]{3I},$ and let
  $m_{0}^{*}=$ the closest integer to $a_{0}\sqrt[3]{\frac{n^{2}}{\log
  n}}$.
For the lower bound, simply let $m=m_{0}^{*}$ in the trivial lower
bound
 $E_{n}({\bf T})\geq  P_{n}
 (Z=m)M_{m}. $Then, by Theorem \ref{zpmf}, Theorem \ref{mun}, and Stirling's
formula, $E_{n}({\bf T})$ is greater than
\[
\exp\left(-\frac{(m_{0}^{*})^{2}}{2n}+
O(\frac{(m_{0}^{*})^{3}}{n^{2}}) +
\beta_{0}\sqrt{\frac{m_{0}^{*}}{\log m_{0}^{*}}}+ O(\frac{
\sqrt{m_{0}^{*}}\log\log m_{0}^{*}}{\log m_{0}^{*}}) \right)
\]

\[
=\exp\left(
\frac{k_{0}n^{1/3}}{\log^{2/3}n}+O(\frac{n^{1/3}\log\log
n}{\log^{7/6}n} ) \right).
\]
For the upper bound, suppose $\epsilon >0$ is a fixed but
arbitrarily small positive number. Define
$\beta_{\epsilon}=\beta+\epsilon,$ and $w_{\epsilon}(m)=\frac{n!}{
(n-m)! n^{m-1} }e^{\beta_{\epsilon}\sqrt{m/\log m}}.$ By Theorem
\ref{mun}, $M_{m}\leq e^{\beta_{\epsilon}\sqrt{m/\log m}}$ for all
sufficiently large $m$.
 Therefore,  for all sufficiently large
$n$,
\begin{equation}
E_{n}({\bf T}) \leq n\max_{m\leq n}P_{n}( Z=m)M_{m}\leq
\max_{m\leq n}w_{\epsilon}(m).
\end{equation}
For $6\leq m\leq n$, let $G_{n,\epsilon}(m)=\log w_{\epsilon}(m)$.
If we write $(n-m)!=\Gamma(n+1-m)$,
 then  $G_{n,\epsilon}(x)$ is twice differentiable for all real numbers
 $x$
 with $6\leq x\leq n.$
 Let  $\Psi(y) = \frac{\Gamma^{'}(y)}{\Gamma(y)}$ be
 the logarithmic derivative of the Gamma
 function so that the first two  derivatives of $G_{n,\epsilon}$ are
\begin{equation}
\label{gprime} G_{n,\epsilon}^{'}(x)= \Psi(n+1-x)-\log n
+\frac{\beta_{\epsilon}}{2 \sqrt{x\log x}}(1-\frac{1}{\log x}),
\end{equation}
and $ $
\begin{equation}
\label{concave}
G_{n,\epsilon}^{''}(x)=-\Psi^{'}(n+1-x)+\frac{\beta_{\epsilon}}{4}\frac{(3-\log^{2}x)}{x^{3/2}\log^{5/2}x}.
\end{equation}
  It is well known \cite{Abram} that
 $\Psi^{'}(y)=\sum\limits_{k=0}^{\infty}\frac{1}{(y+k)^{2}}>0.$
 Thus
 both terms in (\ref{concave}) are negative, and we we  have
$G^{''}_{n,\epsilon}(x)<0$ for $6\leq x\leq n.$   Let
$x_{n,\epsilon}^{*}$ be the unique solution to
$G_{n,\epsilon}^{'}(x)=0$ at which $G_{n,\epsilon}$ attains its
maximum. We need to estimate $x_{n,\epsilon}^{*}$, and then use
that estimate to approximate $G_{n,\epsilon}(x_{n,
\epsilon}^{*}).$
\par
Define
$
m_{n,\epsilon}^{*}=\beta_{\epsilon}^{2/3}\sqrt[3]{3/8}\frac{n^{2/3}}{(\log
n )^{1/3}},$
 and let $\delta\in (0,\frac{1}{2}).$ In order to prove that
 \begin{equation}
 \label{squeeze}
(1-\delta)m_{n,\epsilon}^{*}< x_{n, \epsilon}^{*}<
(1+\delta)m_{n,\epsilon}^{*},
\end{equation}
it suffices to verify that
 $G_{n,\epsilon}^{'}((1-\delta)m_{*} )>0$  and $G_{n,\epsilon
}^{'}((1+\delta)m_{*} )<0.$
 It is well known \cite{Abram}
that
\begin{equation}
\label{psi} \Psi(y)=\log
y+O(\frac{1}{y}).
\end{equation}
Putting (\ref{psi}) in (\ref{gprime}), we get
\begin{equation}
\label{geprime}
 G_{n,\epsilon}^{'}((1-\delta)m_{*} )=
 \frac{      \sqrt[3]{3\beta_{\epsilon}^{2}}
 }
 {\sqrt{1-\delta}\sqrt[3]{8n\log n}}
 \biggl\lbrace
 1    -(1-\delta)^{3/2}+ O(\frac{\log\log n}{\log n}) )
 \biggr\rbrace .
 \end{equation}
 To determine the sign, note that
  $1   -(1-\delta)^{3/2}>\frac{3}{4}\delta.$
 Hence $G_{n,\epsilon}^{'}((1-\delta)m_{*}) >0,$ and we
can even allow  $\delta=\delta_{n}$ to depend on $n$ so long as
$\frac{\delta\log n}{\log\log n}\rightarrow \infty.$ By similar
reasoning $G_{n,\epsilon}^{'}((1+\delta)m_{*}) <0.$ Therefore
 $ x_{n,\epsilon}^{*}=m_{n,\epsilon}^{*}(1+O(\frac{(\log\log n)^{2}}{\log n}).$
 But then, by  Stirling's
formula,
 $  G_{n,\epsilon}(x_{n,\epsilon}^{*})=
\frac{k_{\epsilon} n^{1/3}}{\log^{2/3}n}(1+o(1)),$ where
\[ k_{\epsilon}=  -\frac{(\beta_{\epsilon}^{2/3}\sqrt[3]{3/8})^{2}}{2}+
\beta_{\epsilon}\sqrt{
\frac{\beta_{\epsilon}^{2/3}\sqrt[3]{3/8}}{2/3} }.
\] The theorem now follows from the fact that $\epsilon$ was an
arbitrarily small positive number, and
$\lim\limits_{\epsilon\rightarrow 0^{+}}k_{\epsilon}=k_{0}$.
\end{proof}

\section{Discussion}
 The calculations in this paper depend heavily on the fact that
the probability measure $P_{n}$ is uniform. A separate paper
considers  Ewens-type distributions where the probability of a
permutation or mapping is weighted according  to the number of
components it has \cite{BT},\cite{jh2},\cite{EJS}.
  I do not know if  the results in these papers can be extended to more general
independent choice models  such as those considered by Jaworski in
\cite{J1} or the p-mappings of  Aldous and Pitman,
\cite{AP1},\cite{AMP}.

\end{document}